

\documentclass[10pt]{amsart}
\usepackage{amsmath,amssymb,enumerate}
  
  \date{\today}
  
\parindent=0pt

\newcommand{\ds}{\displaystyle}

\renewcommand{\r}{\mathbb{R}}
\DeclareMathOperator{\vol}{vol}%


\newtheorem{theorem}{\rm\bf Theorem}
\newtheorem*{atheorem}{\rm\bf Duality Theorem}
\newtheorem{proposition}[theorem]{\rm\bf Proposition}
\newtheorem{lemma}[theorem]{\rm\bf Lemma}

\theoremstyle{definition}
\newtheorem{definition}[theorem]{\rm\bf Definition}

\title{A short proof of the H\"older-Poincar\'{e} Duality for $L_{p}$-cohomology}

\author{Vladimir Gol'dshtein} 
\address{Vladimir Gol'd'shtein, Department of Mathematics,  
Ben Gurion University of the Negev,  P.O.Box 653, Beer Sheva, Israel}  
\email{vladimir@bgumail.bgu.ac.il}

\author{Marc Troyanov} 
\address{M. Troyanov, 
Section de MathŽmatiques,  
\'Ecole Polytechnique F{\'e}derale de Lausanne, 
1015 Lausanne - Switzerland}
\email{marc.troyanov@epfl.ch}


\begin{document}
\begin{abstract}
We give a short proof of the duality theorem for the reduced $L_p$-cohomology
of  a complete oriented  Riemannian manifold.

\medskip

\noindent AMS Mathematics Subject Classification:  58A10, 58A12,53c
\\
 Keywords: $L_{p}$-cohomology, Poincar\'{e} duality. 
\end{abstract}
\maketitle


Let $(M,g)$ be a Riemannian manifold. For any $1\leq p < \infty$ we denote by 
$L^{p}(M,\Lambda^{k})$ the space of $p$-integrable differential forms on $M$.
An element of that space is a measurable differential $k$-forms $\omega$
such that
 \[
\Vert\omega\Vert_{p}:=\left(\int_{M}|\omega|_{x}^{p}d\vol_{g}(x)\right)^{1/p}<\infty.
\]
Let $Z^k_p(M) = \ker d \cap L^p(M,\Lambda^k)$, this is the set of weakly
closed forms in $L^p(M,\Lambda^k)$  and
\[
  B^k_{p}(M) = d\left(L^{p}(M,\Lambda^{k-1}) \right) \cap
  L^p(M,\Lambda^k).
\]
We also denote by  $\overline{B}_{p}^{k}(M)$  the closure of
$B_{p}^{k}(M)$ in $L^p(M,\Lambda^k)$.
Because $Z^k_p(M) \subset  L^p(M,\Lambda^k)$ is a closed subspace and $d\circ d = 0$, we have 
$\overline{B}_{p}^{k}(M) \subset Z_{p}^{k}(M)$.
The reduced $L_{p}$\emph{-cohomology} of $(M,g)$
(where $1\leq p < \infty $) is defined to be the quotient
\begin{equation*}
\overline{H}_{p}^{k}(M) =Z_{p}^{k}(M)/\overline{B}_{p}^{k}(M).
\end{equation*}

\medskip

This is a Banach space and the  goal of this paper is to prove the following

\begin{atheorem}\label{th.duality1}
Let  $(M,g)$ be a complete oriented Riemannian manifold of dimension $n$ and $1<p<\infty$. Then 
$\overline{H}_{p}^{k}(M)$ and $\overline{H}_{p'}^{n-k}(M)$ are dual to each other. The duality is
given by the integration pairing:
$$
 \begin{array}{cccc}
 \overline{H}_{p}^{k}(M) \times \overline{H}_{p'}^{n-k}(M) & \to & \quad \r \\ \\
([\omega] , [\theta])  &\mapsto &  \int_M\omega\wedge\theta. 
 \end{array} 
$$
\end{atheorem}

\medskip

\textbf{Remark} The result has been obtained in 1986 by 
V. M. Gol'dshtein, V.I. Kuz'minov and I.A.Shvedov, see \cite{GK4}. In fact that paper
also describes the dual space to the $L_p$-cohomology of non complete manifolds. 
The proof we present here is simpler and more direct than the proof in  \cite{GK4}, although
it doesn't seem to be extendable to the non complete case. Note that this duality theorem is 
useful to prove vanishing or non vanishing results in $L_p$-cohomology, see e.g.
\cite{Gromov93,Pansu2008,GT2006}.

Let us also mention that Gromov deduced the above theorem from the simplicial version of
the $L_p$-cohomology, see \cite{Gromov93}. Gromov's argument works only for Riemannian manifolds
with bounded geometry, while the proof we give here works for any complete manifold.
Our proof can also be extended to the more general $L_{q,p}$-cohomology, see \cite{qpduality}.

\medskip  

The proof will rest on a few auxiliary facts.
We will first need a description of the dual space to $L^{p}(M,\Lambda^{k})$, see \cite{GK4}: 
\begin{proposition}
If $1 < p<\infty$ and $p'=p/(p-1)$, then the  pairing 
$L^{p}(M,\Lambda^{k})\times L^{p'}(M,\Lambda^{n-k})\rightarrow\r$
defined by
\begin{equation} \label{Ipairing}
  \left\langle \omega,\varphi\right\rangle =\int_{M}\omega\wedge\varphi
\end{equation}
is continuous and non degenerate.
\end{proposition}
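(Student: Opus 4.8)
The plan is to deduce both assertions from the pointwise linear algebra of the Hodge star operator, which is globally defined on $M$ because $(M,g)$ is oriented. Recall the standard facts: for $\alpha,\gamma\in\Lambda^k T^*_xM$ one has $\alpha\wedge\star\gamma=\langle\alpha,\gamma\rangle_x\,d\vol_g$; the map $\star\colon\Lambda^jT^*_xM\to\Lambda^{n-j}T^*_xM$ is a linear isometry with $\star\star=\pm\mathrm{id}$; and $\star$ depends smoothly on $x$, hence carries measurable forms to measurable forms. Only elementary multilinear algebra and H\"older's inequality will be used; completeness of $M$ plays no role here.

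For continuity I would first record the pointwise bound $|\omega\wedge\varphi|_x\le|\omega|_x\,|\varphi|_x$. Indeed, writing $\varphi=\star\psi$ with $\psi=\star^{-1}\varphi\in\Lambda^k$ and $|\psi|_x=|\varphi|_x$, the $n$-form $\omega\wedge\varphi=\langle\omega,\psi\rangle_x\,d\vol_g$ has norm $|\langle\omega,\psi\rangle_x|\le|\omega|_x|\psi|_x=|\omega|_x|\varphi|_x$ by the Cauchy--Schwarz inequality. Integrating and applying the H\"older inequality with exponents $p$ and $p'$ gives
\[
  \left|\int_M\omega\wedge\varphi\right|\le\int_M|\omega|_x\,|\varphi|_x\,d\vol_g(x)\le\|\omega\|_p\,\|\varphi\|_{p'},
\]
so the pairing $\langle\cdot,\cdot\rangle$ is a bounded bilinear form.

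For non-degeneracy, suppose $\omega\in L^p(M,\Lambda^k)$ with $\omega\neq 0$. Define the measurable $(n-k)$-form $\varphi:=|\omega|^{p-2}\star\omega$ on $\{|\omega|\neq 0\}$ and $\varphi:=0$ elsewhere. Then $|\varphi|_x=|\omega|_x^{p-1}$ almost everywhere, hence $\|\varphi\|_{p'}^{p'}=\int_M|\omega|_x^{(p-1)p'}\,d\vol_g=\|\omega\|_p^p<\infty$ (this is where $p>1$ is used, so that $\varphi$ is a genuine element of $L^{p'}$), and
\[
  \int_M\omega\wedge\varphi=\int_M|\omega|_x^{p-2}\,\omega\wedge\star\omega=\int_M|\omega|_x^p\,d\vol_g=\|\omega\|_p^p>0.
\]
Symmetrically, given $\varphi\in L^{p'}(M,\Lambda^{n-k})$ with $\varphi\neq 0$, one sets $\omega:=|\varphi|^{p'-2}\star\varphi\in L^p(M,\Lambda^k)$ and, using $\omega\wedge\varphi=(-1)^{k(n-k)}\varphi\wedge\varphi^{\,}$... more precisely $\omega\wedge\varphi=(-1)^{k(n-k)}\varphi\wedge\omega$ and $\varphi\wedge\star\varphi=|\varphi|^2d\vol_g$, one gets $\int_M\omega\wedge\varphi=\pm\|\varphi\|_{p'}^{p'}\neq 0$. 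Thus the pairing is non-degenerate in each argument.

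The argument is routine; the points that deserve a line of care are the pointwise identity $\alpha\wedge\star\gamma=\langle\alpha,\gamma\rangle_x\,d\vol_g$ together with the isometry property of $\star$ (standard linear algebra, and the reason orientability is assumed so that $\star$ exists globally), and the measurability and $L^{p'}$-membership of the explicit test form $|\omega|^{p-2}\star\omega$, which is exactly where the restriction $p>1$ is needed. I do not expect any genuine obstacle beyond these bookkeeping checks.
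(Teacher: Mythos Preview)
Your argument is correct. Note, however, that the paper itself does not actually prove this proposition: it simply states the result and refers the reader to \cite{GK4}. So there is no proof in the paper to compare against; your write-up supplies exactly the routine verification the authors chose to omit.

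The approach via the Hodge star is the standard one: the pointwise identity $\alpha\wedge\star\gamma=\langle\alpha,\gamma\rangle_x\,d\vol_g$ together with the isometry property of $\star$ gives the bound $|\omega\wedge\varphi|_x\le|\omega|_x|\varphi|_x$, and then H\"older finishes continuity; the explicit test form $|\omega|^{p-2}\star\omega$ realises equality in both inequalities and yields non-degeneracy. The only cosmetic issue is the stumbling exposition in the symmetric case (``$\varphi\wedge\varphi$\dots more precisely\dots $\pm\|\varphi\|_{p'}^{p'}$''): you can avoid the sign altogether by taking $\omega=|\varphi|^{p'-2}\star^{-1}\varphi$ rather than $|\varphi|^{p'-2}\star\varphi$, since then $\omega\wedge\varphi=|\varphi|^{p'-2}(\star^{-1}\varphi)\wedge\star(\star^{-1}\varphi)=|\varphi|^{p'}\,d\vol_g$ directly.
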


\medskip
 
We will also need the following density result whose  proof is based on  regularization methods,  see e.g. \cite{GK3,GT2006}:
 \begin{proposition}\label{prop.density}
Let $\theta \in  L^p(M,\Lambda^{k-1})$ be a $(k-1)$-form whose weak exterior differential is $p$-integrable,  $d\theta \in  L^p(M,\Lambda^{k})$. Then there exists a sequence
$\theta_{j}\in C^{\infty}(M,\Lambda^{k-1})$ such that  $\ds \theta=\lim_{j\rightarrow\infty}\theta_{j}$ 
and $\ds d\theta=\lim_{j\rightarrow\infty}d\theta_{j}$ in $L^{p}$. 
\end{proposition}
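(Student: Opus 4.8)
The plan is to treat this as a manifold version of Friedrichs' mollification lemma: localize $\theta$ with a partition of unity, regularize each piece by convolution in a coordinate chart, and glue the regularizations back together.

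First I would fix a countable locally finite atlas $\{(U_i,\phi_i)\}$ of $M$ by relatively compact coordinate charts and a smooth partition of unity $\{\rho_i\}$ subordinate to it, so that each $\operatorname{supp}\rho_i$ is a compact subset of $U_i$. Writing $\theta=\sum_i\rho_i\theta$, the piece $\theta_i:=\rho_i\theta$ is an $L^p$ form supported in the compact set $\operatorname{supp}\rho_i\subset U_i$, and by the Leibniz rule for weak differentials $d\theta_i=\rho_i\,d\theta+d\rho_i\wedge\theta$ is again in $L^p$ (here $d\rho_i$ is bounded because $\rho_i$ is smooth with compact support, and $\theta\in L^p$). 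Read in the chart $\phi_i$, $\theta_i$ becomes a compactly supported $L^p$ form on an open subset of $\r^n$ whose weak exterior derivative lies in $L^p$, and on the compact set in question the Riemannian and Euclidean norms are comparable, so $L^p$ estimates pass back and forth up to fixed constants.

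Next I would regularize: set $\theta_{i,\delta}:=\eta_\delta\ast\theta_i$, the componentwise convolution in the chart coordinates with a standard mollifier $\eta_\delta$. The point that makes this work for $d$ (and the reason one works with $d$ rather than, say, the codifferential) is that in coordinates the exterior derivative is a first-order operator with \emph{constant} coefficients, hence commutes with convolution: $d\theta_{i,\delta}=\eta_\delta\ast(d\theta_i)$. The standard properties of mollifiers then give $\theta_{i,\delta}\to\theta_i$ and $d\theta_{i,\delta}\to d\theta_i$ in $L^p$ as $\delta\to0$, with $\operatorname{supp}\theta_{i,\delta}$ contained in a fixed compact subset of $U_i$ once $\delta$ is small. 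Given $\epsilon>0$, I would then choose $\delta_i>0$ small enough that $\operatorname{supp}\theta_{i,\delta_i}\subset U_i$ and $\|\theta_{i,\delta_i}-\theta_i\|_p+\|d\theta_{i,\delta_i}-d\theta_i\|_p<\epsilon\,2^{-i}$, and put $\theta':=\sum_i\theta_{i,\delta_i}$. Since $\{U_i\}$ is locally finite and $\operatorname{supp}\theta_{i,\delta_i}\subset U_i$, this sum is locally finite, so $\theta'\in C^\infty(M,\Lambda^{k-1})$ and $d\theta'=\sum_i d\theta_{i,\delta_i}$. Combining this with $\sum_i d\rho_i=0$ (so that $\sum_i d\theta_i=\sum_i\rho_i\,d\theta=d\theta$) and the triangle inequality for countable sums in $L^p$ gives $\|\theta-\theta'\|_p+\|d\theta-d\theta'\|_p<\epsilon$, and letting $\epsilon=1/j$ produces the required sequence $\theta_j$.

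The argument carries no real difficulty; the step that needs the most care is the interface between the regularization and the glueing — one must pick the parameters $\delta_i$ small enough that the supports $\operatorname{supp}\theta_{i,\delta_i}$ remain a locally finite family inside the charts, so that $\theta'$ is genuinely smooth and $d\theta'$ may be computed term by term. I would also remark that, unlike the density of \emph{compactly supported} smooth forms, this statement does not use completeness of $M$; if completeness is assumed, one may alternatively first reduce to the case of compactly supported $\theta$ by multiplying by cutoff functions $\chi_R$ with $|d\chi_R|\le C/R$ (available via Hopf--Rinow), since then $d(\chi_R\theta)=\chi_R\,d\theta+d\chi_R\wedge\theta\to d\theta$ in $L^p$.
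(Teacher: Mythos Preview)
Your argument is correct and is precisely the standard de Rham regularization that the paper invokes: the paper does not actually prove this proposition but only states that the proof ``is based on regularization methods'' and refers to \cite{GK3,GT2006}. What you have written---localize with a partition of unity, mollify in charts using that $d$ has constant coefficients in coordinates, and reassemble with a geometric error budget---is exactly that method, so there is nothing to compare.
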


\medskip

The next lemma is the place where the completeness hypothesis enters.
\begin{lemma}\label{lem.ZB}
 If $(M,g)$ is complete, then $d\mathcal{D}^{k-1}(M)$ is dense in $B^k_{p}(M)$.
\end{lemma}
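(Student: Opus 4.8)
\emph{Proof plan.} The plan is to start from an arbitrary $\beta \in B^k_{p}(M)$, write $\beta = d\theta$ with $\theta \in L^{p}(M,\Lambda^{k-1})$ and $d\theta = \beta \in L^{p}(M,\Lambda^{k})$, and to produce a sequence in $d\mathcal{D}^{k-1}(M)$ converging to $\beta$ in $L^{p}$. First I would use Proposition \ref{prop.density} to reduce to the case where $\theta$ is smooth: it gives smooth forms $\theta_{j}$ with $\theta_{j}\to\theta$ and $d\theta_{j}\to\beta$ in $L^{p}$, so it suffices to approximate $d\theta$ for $\theta \in C^{\infty}(M,\Lambda^{k-1})$ and then pass to the general case by a diagonal argument. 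So from now on I assume $\theta$ smooth, and the whole issue becomes cutting $\theta$ down to compact support without destroying $L^p$-convergence of its differential.

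The second and main step is the construction of a suitable exhaustion by cut-off functions, and this is the only place where completeness is used. Fix a base point $x_{0}\in M$. Since $(M,g)$ is complete, the Hopf--Rinow theorem ensures that the closed balls $\overline{B}(x_{0},R)$ are compact, so $M$ is exhausted by them; mollifying the $1$-Lipschitz function $x\mapsto d(x,x_{0})$ over these compacta yields a smooth $\rho\colon M\to[0,\infty)$ with $|\rho(x)-d(x,x_{0})|\le 1$ and $|\nabla\rho|\le 2$ everywhere. Fix $\phi\in C^{\infty}(\r)$ with $0\le\phi\le 1$, $\phi\equiv 1$ on $(-\infty,1]$ and $\phi\equiv 0$ on $[2,\infty)$, and set $\chi_{R}:=\phi(\rho/R)$. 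Then $\chi_{R}\in C^{\infty}_{c}(M)$, we have $0\le\chi_{R}\le 1$ with $\chi_{R}\to 1$ pointwise as $R\to\infty$, and, crucially, $|\nabla\chi_{R}|\le 2\Vert\phi'\Vert_{\infty}/R\to 0$.

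With this in hand I would set $\beta_{R}:=d(\chi_{R}\,\theta)$, which lies in $d\mathcal{D}^{k-1}(M)$ because $\chi_{R}\theta$ is smooth with compact support, and compute $\beta_{R}=\chi_{R}\,d\theta + d\chi_{R}\wedge\theta$. The first term converges to $d\theta=\beta$ in $L^{p}$ by dominated convergence ($0\le\chi_{R}\le 1$, $\chi_{R}\to 1$ pointwise, $d\theta\in L^{p}$), and the gradient bound gives $\Vert d\chi_{R}\wedge\theta\Vert_{p}\le (2\Vert\phi'\Vert_{\infty}/R)\,\Vert\theta\Vert_{p}\to 0$. Hence $\beta_{R}\to\beta$ in $L^{p}$, which proves the lemma after the diagonalization mentioned above.

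I expect the only genuinely delicate point to be the construction of smooth cut-offs $\chi_{R}$ whose gradients tend to $0$ uniformly: this rests entirely on completeness through the precompactness of balls, and on a non-complete manifold such a family need not exist — which is precisely why the lemma, and with it the duality theorem, fails in general in the non-complete setting. If one wishes to avoid mollifying the distance function, one can instead take $\chi_{R}$ merely Lipschitz with compact support, form the Lipschitz compactly supported form $\chi_{R}\theta$, and apply Proposition \ref{prop.density} once more to it; the estimates are identical.
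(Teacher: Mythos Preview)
Your proof is correct and follows essentially the same approach as the paper's: regularize $\theta$ via Proposition~\ref{prop.density}, multiply by compactly supported cut-offs with uniformly vanishing gradients (whose existence rests on completeness), and control the two terms from the Leibniz rule by dominated convergence and the gradient bound respectively. The only cosmetic differences are that the paper simply asserts the existence of such cut-offs rather than constructing them from the distance function, and combines smoothing and cutting-off into a single sequence $\tilde{\theta}_{j}=\eta_{j}\theta_{j}$ instead of proceeding in two steps plus a diagonal argument.
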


\begin{proof}
Because $M$ is complete, one can find  a sequence
of smooth functions with compact support \(\{\eta_j\} \subset C^{\infty}_0(M)\) such that 
 \(0 \leq \eta_j \leq 1\), \  \( \lim_{j\to \infty} \sup |d\eta_j| = 0\) and 
 \(\eta_j \to 1\) uniformly on every compact subset of \(M\).
Let $\omega\in B_{p}^{k}(M)$, then there
exists $\theta \in  L^p(M,\Lambda^{k-1})$ such that $d\theta=\omega$. Choose a sequence 
$\{\theta_{j}\} \subset  C^{\infty}(M,\Lambda^{k-1})$ as in Proposition
\ref{prop.density}  and set
$\tilde{\theta}_{j} = \eta_j \theta_{j} \in \mathcal{D}^{k-1}$. We then have
$$
 \| \tilde{\theta}_{j} - {\theta}_{j}  \|_p =  \| (\eta_j -1){\theta}_{j}  \|_p \to 0
$$
and
\begin{align*}
 \|d \tilde{\theta}_{j} - d{\theta}_{j}  \|_p  & \leq  \| (\eta_j -1)d{\theta}_{j}  \|_p 
 +  \| d\eta_j \wedge{\theta}_{j}  \|_p 
\\  &  
 \leq  \| (\eta_j -1)d{\theta}_{j}  \|_p 
 +  \sup |d\eta_j| \cdot \|{\theta}_{j}  \|_p 
  \to 0.
\end{align*}
This implies that $\ds \omega =\lim_{j\rightarrow\infty}d\tilde{\theta}_{j}$ in $L^{p}$.
\end{proof}

\begin{definition}
A duality between two reflexive Banach spaces $X_{0}$,$X_{1}$ is a non degenerate continuous bilinear map  \(I:X_{0}\times X_{1}\to \mathbb{R}\).
A duality naturally induces an isomorphism between $X_{1}$ and the dual of $X_{0}$.

Given such a duality and  a nonempty subset $B$ of $X_{0}$, we define the \emph{annihilator} $B^{\bot} \subset X_1$ of $B$ to be the set of all elements
$\eta \in  X_1$ such that $I(\xi, \eta) = 0$ for all $\xi \in B$.
\end{definition}

Recall few main properties of annihilators. For any $B \subset X_0$ the annihilator $B^{\bot}$ is a closed linear subspace of   $X_1$. The Hahn-Banach theorem implies that if  $B$ is a linear subspace of $X_0$ then $({B^{\bot}})^{\bot}=\overline{B}$. 

For these and further facts on the notion of annihilator, we refer to the book
\cite{Brezis} or \cite{conway90}.

\medskip

The proof of the duality Theorem is based on the following lemma  about annihilators:

\begin{lemma}\label{lem.dualquotient}
Let \(I:X_{0}\times X_{1}\to \mathbb{R}\) be a duality between two reflexive Banach spaces. Let 
\(B_{0},A_{0},B_{1},A_{1}\) be linear subspaces  
such that
\[
 B_{0}\subset A_{0} = B_{1}^{\bot} \subset X_{0}
       \qquad  \text{and} \qquad  
B_{1}\subset A_{1} = B_{0}^{\bot}\subset X_{1}.
\]
Then the pairing \(\overline{I}:\overline{H}_{0}\times\overline{H}_{1}\to \mathbb{R}\)
of \(\overline{H}_{0}:=A_{0}/\overline{B}_{0}\) and \(\overline{H}_{1}:=A_{1}/\overline{B}_{1}\)
is well defined and induces a duality between \(\overline{H}_{0}\) and
\(\overline{H}_{1}\).
\end{lemma}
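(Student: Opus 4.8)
The plan is to work entirely inside the abstract Banach-space setting and never mention differential forms. The quotient spaces $\overline{H}_0 = A_0/\overline{B}_0$ and $\overline{H}_1 = A_1/\overline{B}_1$ are Banach spaces because $A_0, A_1$ are closed (being annihilators) and $\overline{B}_0, \overline{B}_1$ are closed by construction; moreover quotients of reflexive spaces by closed subspaces are reflexive, so both $\overline{H}_0$ and $\overline{H}_1$ are reflexive. First I would check that $\overline{I}$ is well defined: if $\xi \in A_0$ and $\xi' \in \overline{B}_0$, then since $A_1 = B_0^{\bot}$ and the annihilator of $B_0$ equals the annihilator of $\overline{B}_0$ (annihilators are closed), we get $I(\xi', \eta) = 0$ for every $\eta \in A_1$; symmetrically $I(\xi, \eta') = 0$ for $\eta' \in \overline{B}_1$. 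Hence $\overline{I}([\xi],[\eta]) := I(\xi,\eta)$ does not depend on the representatives, and it is plainly bilinear and continuous (with norm bounded by that of $I$, using the quotient norm).

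The heart of the matter is non-degeneracy of $\overline{I}$, and by symmetry of the hypotheses it suffices to prove one side: if $[\xi] \in \overline{H}_0$ satisfies $\overline{I}([\xi],[\eta]) = 0$ for all $[\eta] \in \overline{H}_1$, then $[\xi] = 0$, i.e. $\xi \in \overline{B}_0$. The hypothesis says $I(\xi, \eta) = 0$ for all $\eta \in A_1 = B_0^{\bot}$; that is, $\xi \in (B_0^{\bot})^{\bot}$. By the Hahn–Banach property of annihilators recalled in the excerpt, $(B_0^{\bot})^{\bot} = \overline{B}_0$. Therefore $\xi \in \overline{B}_0$, which is exactly what we wanted. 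The symmetric argument, exchanging the roles of the indices $0$ and $1$ and using $A_0 = B_1^{\bot}$ together with $(B_1^{\bot})^{\bot} = \overline{B}_1$, shows that $\overline{I}([\xi],[\eta]) = 0$ for all $[\xi]$ forces $[\eta] = 0$. Hence $\overline{I}$ is non-degenerate on both factors.

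Finally, a non-degenerate continuous bilinear pairing on a product of reflexive Banach spaces is a duality in the sense of the Definition above: non-degeneracy in the second variable gives an injective bounded map $\overline{H}_1 \to (\overline{H}_0)^*$, and one must argue it is onto — this is where reflexivity is used, via the closed-range / open-mapping machinery, exactly as in the standard fact that $X_1 \cong X_0^*$ for the original duality $I$. The only genuinely delicate point is this surjectivity, but it is a general statement about dualities between reflexive spaces rather than anything specific to our subspaces, so I would either cite \cite{Brezis,conway90} for it or note that it follows by applying the same reasoning to the induced map $\overline{H}_0 \to (\overline{H}_1)^*$ and dualizing. Everything else is bookkeeping with annihilators, and I expect no obstacle there; the whole lemma is really just the observation that $({B^{\bot}})^{\bot} = \overline{B}$ packaged symmetrically.
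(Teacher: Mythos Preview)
Your argument is correct and follows essentially the same route as the paper: the paper uses Hahn--Banach directly to produce, for any $a\notin\overline{B}_0$, a separating element $y\in B_0^{\bot}=A_1$, whereas you phrase the identical step contrapositively via the bipolar identity $(B_0^{\bot})^{\bot}=\overline{B}_0$ that the paper has already recalled. Your added discussion of surjectivity of $\overline{H}_1\to(\overline{H}_0)^*$ goes beyond what the paper actually proves for the lemma---in the paper's Definition a ``duality'' is simply a non-degenerate continuous bilinear form, and the isomorphism with the dual is asserted there rather than argued, so the paper's proof of the lemma stops at non-degeneracy.
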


\begin{proof}
Observe first that $A_{i} \subset X_{i}$ is a closed subspace since 
the annihilator of any subset of a Banach space is always a closed linear subspace.

The bounded bilinear map $I:A_{0}\times A_{1}\rightarrow\mathbb{R}$
is defined by restriction. It gives rise to a well defined bounded
bilinear map $\overline{I}:A_{0}/\overline{B}_{0}\times A_{1}/\overline{B}_{1}\rightarrow\mathbb{R}$
because we have the inclusions $B_{0}\subset A_{0}\subset B_{1}^{\bot}$
and $B_{1}\subset A_{1}\subset B_{0}^{\bot}$.

We show that $\overline{I}$ is non degenerate: let $a_{0}\in A_{0}$
be such that $[a]\neq0\in A_{0}/\overline{B}_{0}$; i.e. $a\not\in\overline{B}_{0}$.
By Hahn-Banach theorem and the fact that $X_{1}$ is dual to $X_{0}$, there
exists an element $y\in X_{1}$ such that $I(a,y)\neq0$ and $I(b,y)=0$
for all $b\in\overline{B}_{0}$. Thus $y\in B_{0}^{\bot}=A_{1}$ and
we have found an element $[y]\in A_{1}/\overline{B}_{1}$ such that
$\overline{I}([a],[y])\neq0$. \ 
The same argument shows that for any $[\alpha]\neq0\in A_{1}/\overline{B}_{1}$,
we can find an element $[x]\in A_{0}/\overline{B}_{0}$ such that
$\overline{I}([x],[\alpha])\neq0$. 
\end{proof}

\medskip
 
{\bf Proof of the Main Theorem.}
Let $\phi \in  L^p(M,\Lambda^k)$, then $d\phi= 0$ in the weak sense if 
and only if $\int_M \phi \wedge d\omega = 0$ for any  $\omega \in\mathcal{D}^{n-k-1}$.
This  precisely means that  $Z^k_p(M)\subset L^p(M,\Lambda^k)$ is the annihilator
of $d\mathcal{D}^{n-k-1}
  \subset L^{p'}(M,\Lambda^{n-k})$ for the pairing (\ref{Ipairing}):  
$$
    Z^k_p(M) = (d\mathcal{D}^{n-k-1})^{\perp}.
$$
By lemma \ref{lem.ZB}, $d\mathcal{D}^{n-k-1}$ and ${B}^{n-k}_{p'}$ have the same annihilator,
thus 
\[
    {B}^k_{p}  \subset  Z^k_{p} =  ({B}^{n-k}_{p'})^{\perp} \subset  L^{p}(M,\Lambda^{k}).
\]  
Similarly, we also have  
\[
   {B}^{n-k}_{p'} \subset  Z^{n-k}_{p'} =  ({B}^k_{p})^{\perp} \subset L^{p'}(M,\Lambda^{n-k}),
\]
and  Lemma \ref{lem.dualquotient}, says that the duality  (\ref{Ipairing})
 induces a duality between
$Z^{n-k}_{p'}/\overline{B}^{n-k}_{p'}$ and $Z^k_{p}/\overline{B}^k_{p}$.
\qed


\end{document}